\documentclass[11pt]{amsart}

\usepackage{amsfonts}
\usepackage{amsthm}
\usepackage{amssymb}
\usepackage{amsmath}
\usepackage{url, hyperref}

\newtheorem{theo}{\bf Theorem}[section]
\newtheorem{lemma}{\bf Lemma}[section]
\newtheorem{coro}{\bf Corollary}[section]

\newcommand{\C}{{\mathcal C}}

\newcommand{\B}{{\mathcal B}}

\newcommand{\Z}{{\mathbb Z}}

\newcommand{\Q}{{\Bbb Q}}
\newcommand{\R}{{\mathbb R}}

\newcommand{\Sp}{{\mathcal S}}

\newcommand{\bea}{\begin{eqnarray*}}
\newcommand{\eea}{\end{eqnarray*}}
\newcommand{\be}{\begin{eqnarray}}
\newcommand{\ee}{\end{eqnarray}}

\newcommand{\ve}{\boldsymbol}

\newcommand{\rank}{\mbox{\rm rank}\,}

\newcommand{\frob}{{F}}

\newcommand{\brauer}{{G}}

\newcommand{\modulo}{\,\mathrm{mod}\;}

\numberwithin{equation}{section}

\begin{document}

\title[Linear Diophantine problems]{On polynomial-time solvable linear Diophantine problems}
\author{Iskander Aliev}
\address{Mathematics Institute, Cardiff University, Cardiff, Wales, UK}
\email{alievi@cardiff.ac.uk}



\begin{abstract}
We obtain a polynomial-time algorithm that, given input $(A, {\ve b})$, where $A=(B|N)\in \Z^{m\times n}$, $m<n$, with nonsingular $B\in\Z^{m\times m}$  and ${\ve b}\in\Z^m$, finds a nonnegative integer solution to the system  $A{\ve x}={\ve b}$ or determines that no such  solution exists, provided that ${\ve b}$ is located sufficiently ``deep'' in the cone generated by the columns of $B$. This result improves on some of the previously known conditions that guarantee polynomial-time solvability of linear Diophantine problems.

\end{abstract}

\keywords{Multidimensional knapsack problem; polynomial-time algorithms; asymptotic integer programming; lattice points; Frobenius numbers}

\subjclass[2000]{Primary: 11D04, 90C10; Secondary:  11H06}

\maketitle

\section{Introduction and Statement of Results}

Consider the linear Diophantine problem
\be
\begin{array}{l}
\mbox{Given } (A, {\ve b})\,, \mbox{ where }\; A\in\Z^{m\times n}, m<n, \rank(A)=m \mbox{ and }\\ {\ve b}\in \Z^m,  \mbox{ find a nonnegative integer solution  to  the system }\\ A{\ve x}= {\ve b}\;\mbox{or determine that no such  solution exists}\,.
\end{array}
\label{linear_equation}
\ee
The problem (\ref{linear_equation}) is referred to as the {\em  multidimensional knapsack problem} and is NP-hard already for $m=1$ (see Papadimitriou and Steiglitz \cite[Section 15.7]{PS}).

Let ${\ve v}_1,\ldots,{\ve v}_n\in \Z^m$  be the columns  of the matrix $A$ and let
\bea
\C_A=\{\lambda_1{\ve v}_1+\cdots+\lambda_n{\ve v}_n: \lambda_1,\ldots,\lambda_n\ge 0\}\,
\eea
be the cone generated by ${\ve v}_1,\ldots,{\ve v}_n$. In this paper, we are interested in the problem of determining subsets $\Sp \subset \C_A$
such that  (\ref{linear_equation}) is solvable in polynomial time provided  ${\ve b}\in \Sp$.
We will use the general approach of Gomory  \cite{Gomory_polyhedra}, that was originally applied to study asymptotic integer programs, and combine it with results from discrete geometry.

We may assume, without loss of generality, that the matrix $A$ is partitioned as
\bea
A=(B|N)\,,
\eea
where $B\in \Z^{m\times m}$ is nonsingular and $N\in \Z^{m\times (n-m)}$.
In what follows, we will denote by  $l_B$ and $l_N$ the Euclidean lengths of the longest columns in the matrices $B$
and $N$, respectively.

Let $\C_B\subset \C_A$ be the cone generated by the columns of the matrix $B$. The main result of this paper shows that (\ref{linear_equation}) is solvable in polynomial time when the right-hand-side vector ${\ve b}$ is located deep enough in the cone $\C_B$.

Let $\C_B(t)\subset \C_B$ denote the affine cone of points in $\C_B$ at Euclidean distance  $\ge t$ from the boundary of $\C_B$. We will denote by $\gcd(A)$  the greatest common divisor
of all  $m\times m$ subdeterminants of $A$.

\begin{theo}\label{PolySubCone2}
There exists a polynomial-time algorithm which, given input $(A, {\ve b})$, where $A=(B | N)\in \Z^{m\times n}$, with nonsingular $B\in\Z^{m\times m}$, and
\be {\ve b}\in \Z^m\cap\C_B\left(l_N\left(\frac{|\det(B)|}{\gcd(A)}-1\right)\right)\,,\label{Gomory_cone}\ee
finds a nonnegative integer solution to the system $A{\ve x}= {\ve b}$ or determines that no such  solution exists.
%
\end{theo}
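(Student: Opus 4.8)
The plan is to follow Gomory's group-relaxation strategy: partition the variables according to $A=(B\,|\,N)$, translate $A\ve x=\ve b$ into a congruence in the finite abelian group $\Z^m/B\Z^m$, solve that group problem by a short nonnegative combination of generators, and then exploit the depth hypothesis \eqref{Gomory_cone} to turn that solution into a genuinely nonnegative one. The hard part will be the group problem: I will need a nonnegative solution whose $\ell_1$-norm is at most $|\det(B)|/\gcd(A)-1$ — exactly the quantity appearing in \eqref{Gomory_cone} — and I will need to produce it in polynomial time, so a straightforward breadth-first search in the group (whose order can be exponential in the input size) is not allowed.

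\emph{Step 1: reduction to a problem in $\Z^m/B\Z^m$.} Write $\ve x=(\ve x_B,\ve x_N)$ with $\ve x_B\in\Z^m$ and $\ve x_N\in\Z^{n-m}$. The integral solutions of $A\ve x=\ve b$ are precisely the pairs with $N\ve x_N\equiv\ve b\pmod{B\Z^m}$ and $\ve x_B=B^{-1}(\ve b-N\ve x_N)$. Passing to $G=\Z^m/B\Z^m$ (order $|\det(B)|$; its invariant-factor structure is computed by Smith normal form), let $\ve g_0,\ve g_1,\dots,\ve g_{n-m}$ be the residues of $\ve b$ and of the columns of $N$, and put $H=\langle\ve g_1,\dots,\ve g_{n-m}\rangle\le G$. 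Since $N\Z^{n-m}+B\Z^m=A\Z^n$ and $[\Z^m:A\Z^n]=\gcd(A)$, we get $\ve g_0\in H$ if and only if $\ve b\in A\Z^n$, and $|H|=|\det(B)|/\gcd(A)=:q$. The membership $\ve b\in A\Z^n$ is tested in polynomial time by Hermite normal form; if it fails, there is no integral solution at all, so no nonnegative one, and the algorithm reports this.

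\emph{Step 2: a short nonnegative representation of $\ve g_0$ (the crux).} I claim that if $\ve g_0\in H$ then there are nonnegative integers $y_1,\dots,y_{n-m}$ with $\sum_i y_i\ve g_i=\ve g_0$ and $\sum_i y_i\le q-1$, and that they can be found in polynomial time. I would argue by induction on the number of nonzero generators. With a single generator $H$ is cyclic of order $q$, and the required $y_1\in\{0,\dots,q-1\}$ is obtained by one modular inversion. For the inductive step set $H'=\langle\ve g_2,\dots,\ve g_{n-m}\rangle$ and $o=[H:H']$; the quotient $H/H'$ is cyclic, generated by the image of $\ve g_1$, so a modular inversion gives $y_1\in\{0,\dots,o-1\}$ with $\ve g_0-y_1\ve g_1\in H'$, and the inductive hypothesis applied to $\ve g_0-y_1\ve g_1$ with $\ve g_2,\dots,\ve g_{n-m}$ furnishes the remaining $y_i$ with $\sum_{i\ge2}y_i\le|H'|-1$. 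Then $\sum_{i\ge1}y_i\le(o-1)+(|H'|-1)\le o|H'|-1=|H|-1$, the middle inequality being $(o-1)(|H'|-1)\ge0$. All the group data used (structure of $G$, of the subgroups $H,H'$, the indices $o$, the cyclic identifications) come from Smith normal form computations, so the procedure runs in polynomial time. Setting $\ve x_N=(y_1,\dots,y_{n-m})\ge0$ yields $N\ve x_N\equiv\ve b\pmod{B\Z^m}$ with $\|\ve x_N\|_1\le q-1$. I expect this telescoping — obtaining the sharp bound $q-1$, rather than a weaker one of order $(n-m)q$ or one involving the exponent of $H$ that a naive coordinatewise reduction gives — to be the genuinely delicate point.

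\emph{Step 3: using the depth hypothesis.} Let $\ve u_j^\top$ be the $j$-th row of $B^{-1}$, so $\C_B=\{\ve z:\ve u_j^\top\ve z\ge0,\ j=1,\dots,m\}$ and the Euclidean distance from an interior point $\ve b$ to $\partial\C_B$ equals $\min_j(B^{-1}\ve b)_j/\|\ve u_j\|$. Hence \eqref{Gomory_cone}, with $t:=l_N(q-1)$, gives $(B^{-1}\ve b)_j\ge t\|\ve u_j\|$ for each $j$. Put $\ve x_B=B^{-1}(\ve b-N\ve x_N)\in\Z^m$; since $\ve x_N\ge0$, $(B^{-1}N\ve x_N)_j=\ve u_j^\top(N\ve x_N)\le\|\ve u_j\|\,\|N\ve x_N\|\le\|\ve u_j\|\,l_N\|\ve x_N\|_1\le t\|\ve u_j\|$, so $(\ve x_B)_j=(B^{-1}\ve b)_j-(B^{-1}N\ve x_N)_j\ge0$ for all $j$. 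Thus $\ve x=(\ve x_B,\ve x_N)\ge0$ solves $A\ve x=\ve b$, and because $\|\ve x_N\|_1<|\det(B)|$ the encoding length of $\ve x$ is polynomial in that of $(A,\ve b)$. The geometric translation and the bit-size bookkeeping here are routine compared with Step 2.
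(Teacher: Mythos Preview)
Your proof is correct and follows essentially the same strategy as the paper: the paper projects onto the last $n-m$ coordinates, works with the lattice $\Lambda(A)=\{\ve w\in\Z^{n-m}:N\ve w\in B\Z^m\}$, and uses its Hermite-normal-form box together with Babai's nearest-plane procedure to locate a nonnegative $\ve w\in\Lambda(A,\ve b)$ satisfying $\prod_i(1+w_i)\le\det\Lambda(A)=q$, hence $\sum_i w_i\le q-1$ --- which is precisely your telescoping bound $(o-1)+(|H'|-1)\le o|H'|-1$ recast in lattice language (the HNF diagonals $v_{ii}$ are the successive indices in a subgroup chain dual to yours). Your Step~3 then coincides with the paper's final paragraph essentially verbatim.
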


We will now consider a special case where the matrix $A$ satisfies the following conditions:
\begin{equation}
\begin{split}
{\rm (i)}&\,\, \gcd(A)=1, \\
{\rm (ii)}&\,\, \{{\ve x}\in\R^n_{\ge 0}: A\,{\ve x}={\ve 0}\}=\{{\ve 0}\}.
\end{split}
\label{positive_primitive_a}
\end{equation}
%

Notice that the condition (i) in  (\ref{positive_primitive_a}) guarantees that the system $A{\ve x}={\ve b}$ has an integer solution for each ${\ve b}\in \Z^m$ (see Schrijver \cite[Corollary 4.1 c]{Schrijver}).
The condition (ii) in  (\ref{positive_primitive_a}), in its turn, guarantees that the polyhedron
$\{{\ve x}\in \R^{n}_{\ge 0}: A{\ve x}={\ve b}\}$
is bounded.

When $m=1$ in the setting  (\ref{positive_primitive_a}), the problem (\ref{linear_equation}) is linked to the well-known {\em Frobenius problem} (see Ramirez Alfonsin \cite{Alf}).
By the condition  (i) in  (\ref{positive_primitive_a}), we have $\gcd(a_{11}, \ldots, a_{1n})=1$ and by (ii) we may assume that the entries of $A$ are positive. For such $A$ the largest integer $b$ such that (\ref{linear_equation}) is infeasible is called the {\em Frobenius number} associated with $A$, denoted by $\frob(A)$.
It is an interesting question to determine whether there exists a polynomial-time algorithm that solves  (\ref{linear_equation}) provided that
\bea
b>\frob(A)\,
\eea
(cf. Conjecture 1.1 in \cite{LLL-knapsacks}).

The best known result in this direction is due to Brimkov \cite{Brimkov_best} (see also \cite{LLL-knapsacks}, \cite{Brimkov} and \cite{Brimkov2}). Specifically, set
\be\label{corners_f}
f_1=a_{11}, f_i=\gcd(a_{11},\ldots, a_{1i})\,,\; i\in \{2, \ldots, n\}\,.
\ee
A classical upper bound of Brauer \cite{Brauer} for the Frobenius numbers states that
\be\label{Brauer_bound}
\frob(A)\le \brauer(A):=a_{12}\frac{f_1}{f_2}+\cdots+a_{1 n}\frac{f_{n-1}}{f_{n}}-\sum_{i=1}^{n} a_{1i}\,.
\ee
Brauer \cite{Brauer} and, subsequently, Brauer and Seelbinder \cite{BrauerSeelbinder} proved that the bound (\ref{Brauer_bound}) is sharp and obtained a necessary and sufficient condition for the equality $\frob(A)= \brauer(A)$.
Brimkov \cite{Brimkov_best}  gave a polynomial-time algorithm that solves  (\ref{linear_equation}) provided that
\be\label{Brimkov_bound}
b>\brauer(A)\,.
\ee
We will show that an algorithm obtained in the proof of Theorem \ref{PolySubCone2} matches the bound (\ref{Brimkov_bound}).
\begin{coro}\label{PT2}
There exists a polynomial-time algorithm which, given input $(A, {b})$, where $A\in\Z^{1\times n}_{>0}$ satisfies (\ref{positive_primitive_a}) and $b\in \Z$ satisfies
\bea b> \brauer(A) \,, \eea
computes a nonnegative integer solution to the equation $A{\ve x}={b}$.
%
\end{coro}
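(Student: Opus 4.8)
The plan is to derive this as a direct specialization of Theorem \ref{PolySubCone2} to the case $m=1$, where the cone $\C_B$ becomes a half-line and the "deep in the cone" condition (\ref{Gomory_cone}) collapses to a single linear inequality on $b$. First I would fix the partition $A=(B\mid N)$ by taking $B=(a_{11})$, so that $\det(B)=a_{11}$ and $\C_B=\R_{\ge 0}$ (using (ii) in (\ref{positive_primitive_a}) to ensure all entries, and hence $a_{11}$, are positive). Since $m=1$, the $m\times m$ subdeterminants of $A$ are just the entries $a_{1i}$, so $\gcd(A)=\gcd(a_{11},\dots,a_{1n})=1$ by hypothesis (i); in particular the factor $|\det(B)|/\gcd(A)$ in (\ref{Gomory_cone}) equals $a_{11}$. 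The quantity $l_N$ is the length of the longest column of $N=(a_{12},\dots,a_{1n})$, i.e. $l_N=\max_{2\le i\le n} a_{1i}$. In dimension one, $\C_B(t)$ is the set of reals at distance $\ge t$ from $0$, which for a positive point is simply $\{b: b\ge t\}$. Hence condition (\ref{Gomory_cone}) for this choice of $B$ reads
\bea
b\ge l_N\,(a_{11}-1)=\Big(\max_{2\le i\le n} a_{1i}\Big)(a_{11}-1)\,.
\eea

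The remaining task is to check that $\brauer(A)\ge (\max_{2\le i\le n} a_{1i})(a_{11}-1)$, so that every $b>\brauer(A)$ automatically satisfies the hypothesis of Theorem \ref{PolySubCone2} (for this particular ordering of the columns). Recalling (\ref{corners_f}), we have $f_1=a_{11}$ and $f_i\mid f_{i-1}$, so each ratio $f_{i-1}/f_i$ is a positive integer and $f_1/f_2\ge 1$; writing out (\ref{Brauer_bound}),
\bea
\brauer(A)=\sum_{i=2}^{n} a_{1i}\Big(\tfrac{f_{i-1}}{f_i}-1\Big)+a_{11}\tfrac{f_1}{f_2}-\sum_{i=1}^n a_{1i}\,,
\eea
and a short manipulation (isolating the $i=2$ term, using $f_1/f_2\ge 1$ together with $a_{11}\le f_1$ and nonnegativity of the remaining summands) should yield the bound $\brauer(A)\ge a_{12}(a_{11}-1)$; one then observes that we are free to reorder the columns of $N$ so that the second column $a_{12}$ is the largest, which only can decrease $\brauer(A)$—or, more carefully, one argues directly that $\brauer(A)\ge(\max_i a_{1i})(a_{11}-1)$ for the chosen ordering.

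I expect the main obstacle to be precisely this last arithmetic comparison between Brauer's bound and the $l_N(|\det B|/\gcd(A)-1)$ threshold: the algorithm from Theorem \ref{PolySubCone2} is run with a specific partition and ordering of columns, so one must be attentive to whether the column realizing $l_N$ coincides with the one appearing with coefficient $f_1/f_2$ in (\ref{Brauer_bound}), and handle the degenerate case $\max_i a_{1i}=a_{11}$ separately. Everything else—the reduction of the geometric condition $\C_B(t)$ to a scalar inequality, and the verification that (i) forces $\gcd(A)=1$ while (ii) forces boundedness and positivity—is routine. Once the inequality $\brauer(A)\ge l_N(a_{11}-1)$ is in hand, Theorem \ref{PolySubCone2} supplies the polynomial-time algorithm, and the existence of a nonnegative integer solution (rather than merely a correct "infeasible" answer) follows because condition (i) guarantees an integer solution exists while $b>\brauer(A)\ge\frob(A)$ rules out infeasibility over the nonnegative integers.
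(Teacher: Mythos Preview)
There is a genuine gap: the inequality $\brauer(A)\ge l_N(a_{11}-1)$ that your reduction hinges on is false in general, already for $n=2$. With $A=(a_{11},a_{12})$ coprime and positive one has $f_1=a_{11}$, $f_2=1$, hence
\[
\brauer(A)=a_{11}a_{12}-a_{11}-a_{12}\,,\qquad l_N(a_{11}-1)=a_{12}(a_{11}-1)=a_{11}a_{12}-a_{12}\,,
\]
and the difference is $-a_{11}<0$. Swapping the two columns gives the symmetric expression, so no reordering rescues the bound. (Your displayed expansion of $\brauer(A)$ also carries a spurious $a_{11}f_1/f_2$ term, but that is incidental.) Thus Theorem \ref{PolySubCone2} with $B=(a_{11})$ only covers $b\ge l_N(a_{11}-1)$, which for every $a_{11}\ge 2$ is strictly larger than $\brauer(A)+1$; the corollary cannot be read off from the theorem as a black box.

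The paper does not deduce the corollary from Theorem \ref{PolySubCone2}; it reuses Algorithm~1 but replaces the crude estimate $\|N{\ve w}\|_2\le l_N(\det(\Lambda(A))-1)$ by a sharper argument specific to $m=1$. Two ingredients are needed. First, an explicit computation (Lemma \ref{boxshape}) identifies the box $\B(\Lambda(A))$ as $[0,f_1/f_2)\times\cdots\times[0,f_{n-1}/f_n)$, so that for integer ${\ve w}$ in this box one gets the exact bound $\sum_{i} a_{1,i+1}w_i\le \brauer(A)+a_{11}$. Second, and this is what recovers the missing $a_{11}$, one uses that ${\ve w}\in\Lambda(A,b)$ forces $\sum_i a_{1,i+1}w_i\equiv b\pmod{a_{11}}$. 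If $u<0$ then $\sum_i a_{1,i+1}w_i$ would be simultaneously $>b$ and $<b+a_{11}$ while lying in the residue class of $b$ modulo $a_{11}$, a contradiction. Your direct reduction discards this congruence information, which is exactly the $a_{11}$-sized gap you were unable to close.
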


Recall that the {\em Minkowski sum} $X+Y$ of the sets $X, Y\subset \R^m$ consists of all points ${\ve x}+{\ve y}$ with ${\ve x}\in X$ and ${\ve y}\in Y$.
For $m\ge 2$, Aliev and Henk \cite{LLL-knapsacks} considered the problem of estimating the minimal $t=t(A)\ge 0$ such that the problem  (\ref{linear_equation})
is solvable in polynomial time provided that $A$ satisfies (\ref{positive_primitive_a}) and
\bea{\ve b}\in \Z^m\cap(t{\ve v}+\C_A)\,,\eea
where ${\ve v}={\ve v}_1+\cdots+{\ve v}_n$ is the sum of columns of $A$.

Theorem 1.1 in \cite{LLL-knapsacks} gives the bound
\be t\le 2^{(n-m)/2-1}p(m,n)(\det(AA^T))^{1/2}\,,\label{desired_cone}\ee
where
\bea p(m,n)=2^{-1/2}(n-m)^{1/2}n^{1/2}\,.\eea
Furthermore, Theorem 1.2 in \cite{LLL-knapsacks} shows that the exponential factor $2^{(n-m)/2-1}$ in (\ref{desired_cone}) is redundant for matrices with
\be
\det(AA^T)> \frac{(n-m)2^{2(n-m-2)}\gamma_{n-m}^{n-m}}{n^2}\,.
\label{lower_det}
\ee
Here $\gamma_k$ is the $k$-dimensional Hermite constant for which we
refer to \cite[Definition 2.2.5]{Martinet}.

Let us now consider the case $m=2$. Condition (\ref{positive_primitive_a})  (ii) implies that the cone $\C_A$ is pointed. Thus we may assume without loss of generality that $A=(B|N)$
with $\C_B=\C_A$. The last result of this paper gives an estimate on the function $t(A)$ that is independent on the dimension $n$
and allows a refinement of (\ref{desired_cone})
when the ratio $l_Bl_N/|\det(B)|$ is relatively small.

\begin{coro}
There exists a polynomial-time algorithm which, given input $(A, {\ve b})$, where $A=(B | N) \in \Z^{2\times n}$,   $B\in\Z^{2\times 2}$ is nonsingular with $\C_B=\C_A$, $A$ satisfies (\ref{positive_primitive_a}) and
\be{\ve b}\in \Z^2\cap\left(\frac{l_Bl_N}{|\det(B)|}\left(|\det(B)|-1\right){\ve v}+\C_A\right )\,,\label{desired_cone_1}\ee
computes a nonnegative integer solution to the system $A{\ve x}={\ve b}$.
\label{polynomial_algorithm_coro}
\end{coro}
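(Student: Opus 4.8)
\noindent\textit{Proof plan for Corollary \ref{polynomial_algorithm_coro}.}
The plan is to reduce the statement to Theorem \ref{PolySubCone2} by an elementary planar computation. Since $A$ satisfies (\ref{positive_primitive_a}) we have $\gcd(A)=1$, so the Gomory-cone hypothesis (\ref{Gomory_cone}) of Theorem \ref{PolySubCone2} becomes simply ${\ve b}\in\Z^2\cap\C_B(l_N(|\det(B)|-1))$; moreover $\C_B=\C_A$ by assumption. It therefore suffices to establish the set inclusion
\[
\frac{l_Bl_N}{|\det(B)|}\bigl(|\det(B)|-1\bigr){\ve v}+\C_A\ \subseteq\ \C_B\bigl(l_N(|\det(B)|-1)\bigr),
\]
after which Theorem \ref{PolySubCone2} supplies the desired polynomial-time algorithm on every admissible input $(A,{\ve b})$.

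To prove the inclusion I would argue as follows. Put $t_0=\frac{l_Bl_N}{|\det(B)|}(|\det(B)|-1)$ and let ${\ve v}_1,{\ve v}_2$ be the columns of $B$. By (\ref{positive_primitive_a})(ii) the cone $\C_A=\C_B$ is pointed, so $\C_B$ is a planar convex cone of angle $<\pi$ with boundary $\partial\C_B=\R_{\ge 0}{\ve v}_1\cup\R_{\ge 0}{\ve v}_2$. The basic geometric fact I would use is that for every $p\in\C_B$ and $i\in\{1,2\}$,
\[
\mathrm{dist}(p,\R_{\ge 0}{\ve v}_i)\ \ge\ \frac{|\det({\ve v}_i,p)|}{|{\ve v}_i|},
\]
where $\det({\ve v}_i,p)$ is the determinant of the matrix with columns ${\ve v}_i,p$: equality holds when the foot of the perpendicular from $p$ to the line $\R{\ve v}_i$ lies on the ray, and otherwise the nearest point of the ray is the apex and the distance is the even larger number $|p|$. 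Now write ${\ve b}=t_0{\ve v}+q$ with $q\in\C_A=\C_B$ and expand ${\ve v}=\sum_{j=1}^n{\ve v}_j$. Each ${\ve v}_j$ and $q$ lies in $\C_B$, so each of $\det({\ve v}_1,{\ve v}_j)$ and $\det({\ve v}_1,q)$ is a nonnegative multiple of $\det({\ve v}_1,{\ve v}_2)$; hence there is no cancellation and $|\det({\ve v}_1,{\ve b})|\ge t_0|\det({\ve v}_1,{\ve v}_2)|=t_0|\det(B)|$, and by symmetry $|\det({\ve v}_2,{\ve b})|\ge t_0|\det(B)|$. Dividing by $|{\ve v}_i|\le l_B$ gives $\mathrm{dist}({\ve b},\R_{\ge 0}{\ve v}_i)\ge t_0|\det(B)|/l_B=l_N(|\det(B)|-1)$ for $i=1,2$, so $\mathrm{dist}({\ve b},\partial\C_B)\ge l_N(|\det(B)|-1)$, that is ${\ve b}\in\C_B(l_N(|\det(B)|-1))$, as needed.

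Finally I would check that in this regime the algorithm always outputs a solution, matching the wording of the corollary: by (\ref{positive_primitive_a})(i) the system $A{\ve x}={\ve b}$ is solvable over $\Z$ (Schrijver \cite[Corollary 4.1 c]{Schrijver}), and the mechanism behind Theorem \ref{PolySubCone2}---reducing the knapsack to a group problem in $\Z^m/B\Z^m$ that, because $\gcd(A)=1$, can be solved with an $N$-part ${\ve x}_N$ of total weight at most $|\det(B)|/\gcd(A)-1$, so that $\|N{\ve x}_N\|\le l_N(|\det(B)|/\gcd(A)-1)$ and ${\ve b}-N{\ve x}_N$ remains inside $\C_B$---yields a nonnegative integer solution for every ${\ve b}$ obeying (\ref{Gomory_cone}). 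It is also worth recording that the resulting threshold $\tfrac{l_Bl_N}{|\det(B)|}(|\det(B)|-1)<l_Bl_N$ does not depend on $n$, which is the point of the corollary.

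The one step I expect to need genuine care is the distance bound $\mathrm{dist}(p,\R_{\ge 0}{\ve v}_i)\ge|\det({\ve v}_i,p)|/|{\ve v}_i|$---that is, estimating the distance to a \emph{ray} rather than to its supporting line---together with the sign bookkeeping that rules out cancellation among the determinants $\det({\ve v}_i,{\ve v}_j)$; the passage to Theorem \ref{PolySubCone2} and the simplification afforded by $\gcd(A)=1$ should be entirely routine.
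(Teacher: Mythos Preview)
Your proposal is correct and follows essentially the same route as the paper: reduce to Theorem \ref{PolySubCone2} by showing the set inclusion $t_0{\ve v}+\C_A\subseteq\C_B(l_N(|\det(B)|-1))$, using that $\gcd(A)=1$ and $\C_A=\C_B$. The paper's argument is a terse variant of yours---it writes ${\ve v}={\ve v}_1+{\ve v}_2+{\ve p}$ with ${\ve p}\in\C_B$ and reads off $\mathrm{dist}({\ve v},\partial\C_B)\ge|\det(B)|/l_B$ directly, then scales---whereas you unfold the same estimate into the determinant identity $|\det({\ve v}_i,{\ve b})|\ge t_0|\det(B)|$; the underlying geometry is identical.
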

Noticing that $|\det(B)|\le (\det(AA^T))^{1/2}$, the condition (\ref{desired_cone_1}) improves on (\ref{desired_cone}) provided that
$l_Bl_N/|\det(B)|\le 2^{(n-m)/2-1}p(m,n)$. For matrices $A$ satisfying (\ref{lower_det}) an improvement occurs when $l_Bl_N/|\det(B)|\le p(m,n)$.

\section{Tools from discrete geometry}

For linearly independent ${\ve b}_1, \ldots, {\ve b}_k$ in $\R^d$, the set $\Lambda=\{\sum_{i=1}^{k} \lambda_i {\ve b}_i:\, \lambda_i\in \Z\}$ is a $k$-dimensional {\em lattice} with {\em basis} ${\ve b}_1, \ldots, {\ve b}_k$ and {\em determinant}
$\det(\Lambda)=(\det({\ve b}_i\cdot {\ve b}_j)_{1\le i,j\le k})^{1/2}$, where ${\ve b}_i\cdot {\ve b}_j$ is the standard inner product of the basis vectors ${\ve b}_i$ and  ${\ve b}_j$.
For a lattice $\Lambda\subset\R^d$ and ${\ve y}\in \R^d$, the set ${\ve y}+\Lambda$ is an {\em affine lattice} with determinant $\det(\Lambda)$.

Let $\Lambda$ be a lattice in $\R^d$ with basis ${\ve b}_1, \ldots, {\ve b}_d$ and
let $\hat{\ve b}_i$ be the vectors obtained from the Gram-Schmidt orthogonalisation of ${\ve b}_1, \ldots, {\ve b}_d$:
\be
\label{GS}
\begin{array}{l}
\hat{\ve b}_1= {\ve b}_1\,, \\
\hat{\ve b}_{i}= {\ve b}_i-\sum_{j=1}^{i-1}\mu_{i,j}\hat{\ve b}_j\,, \;\;j\in\{2,\ldots,d\}\,,
\end{array}
\ee
where $\mu_{i,j}=({\ve b}_i\cdot\hat{\ve b}_j)/|\hat{\ve b}_j|^2$.

We will associate with the basis ${\ve b}_1, \ldots, {\ve b}_d$ of $\Lambda$ the
box
\bea
{\hat \B}({\ve b}_1, \ldots, {\ve b}_d)=[0,\hat{\ve b}_1)\times [0,\hat{\ve b}_2) \times \cdots \times [0, \hat{\ve b}_d)\,.
\eea
\begin{lemma}\label{point_in_GS_box}
There exists a polynomial-time algorithm that, given a basis ${\ve b}_1, \ldots, {\ve b}_d$ of a $d$-dimensional lattice $\Lambda\subset \Q^d$ and a point ${\ve x}$ in $\Q^d$ finds a point ${\ve y}\in \Lambda$ such that ${\ve x}\in {\ve y}+{\hat \B}({\ve b}_1, \ldots, {\ve b}_d)$.
\end{lemma}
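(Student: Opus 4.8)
The plan is to reduce the problem to the classical "nearest-plane" algorithm of Babai, which is exactly the Gram--Schmidt orthogonalisation procedure read as an algorithm. Given the basis ${\ve b}_1,\ldots,{\ve b}_d$ and the target point ${\ve x}$, first compute the Gram--Schmidt vectors $\hat{\ve b}_1,\ldots,\hat{\ve b}_d$ together with the coefficients $\mu_{i,j}$ as in (\ref{GS}); this is a rational computation on inputs of polynomial bit-size, so it runs in polynomial time, and the $\hat{\ve b}_i$ span the same space as the ${\ve b}_i$. Since ${\ve x}\in\Q^d=\spn(\hat{\ve b}_1,\ldots,\hat{\ve b}_d)$, write ${\ve x}=\sum_{i=1}^d \alpha_i\hat{\ve b}_i$ with rational $\alpha_i$ obtained by solving a triangular system (or by taking inner products with the $\hat{\ve b}_j$).

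Next I would peel off lattice vectors from the top index down. Set ${\ve x}^{(d)}={\ve x}$; having defined ${\ve x}^{(i)}$, let $c_i=\lfloor ({\ve x}^{(i)}\cdot\hat{\ve b}_i)/|\hat{\ve b}_i|^2\rfloor$ and put ${\ve x}^{(i-1)}={\ve x}^{(i)}-c_i{\ve b}_i$. Then set ${\ve y}=\sum_{i=1}^d c_i{\ve b}_i\in\Lambda$. The claim is that ${\ve x}-{\ve y}\in\hat\B({\ve b}_1,\ldots,{\ve b}_d)$, i.e. that if we expand ${\ve x}-{\ve y}=\sum_{i=1}^d \beta_i\hat{\ve b}_i$ then $\beta_i\in[0,1)$ for every $i$. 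This follows by induction on the index: because subtracting $c_i{\ve b}_i$ only affects the coordinates $\beta_1,\ldots,\beta_i$ (as $\hat{\ve b}_i={\ve b}_i-\sum_{j<i}\mu_{i,j}\hat{\ve b}_j$ has no component along $\hat{\ve b}_j$ for $j>i$), the coordinate $\beta_i$ of ${\ve x}-{\ve y}$ along $\hat{\ve b}_i$ equals $({\ve x}^{(i)}\cdot\hat{\ve b}_i)/|\hat{\ve b}_i|^2-c_i$, which lies in $[0,1)$ by the choice of the floor function, and this coordinate is not disturbed by the later subtractions of $c_{i-1}{\ve b}_{i-1},\ldots,c_1{\ve b}_1$. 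Hence every $\beta_i\in[0,1)$, as required.

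Finally I would check that the whole procedure is polynomial: each $c_i$ is a rational of controlled size (its numerator and denominator are bounded in terms of the entries of the basis, their Gram--Schmidt data, and ${\ve x}$), each vector ${\ve x}^{(i)}$ has polynomially bounded bit-size since its $\hat{\ve b}$-coordinates stay in $[0,1)$ plus a bounded integer combination of basis vectors, and there are only $d$ rounds. Thus ${\ve y}$ is produced in polynomial time. The only genuinely delicate point is the bit-size bookkeeping — verifying that the intermediate vectors ${\ve x}^{(i)}$ and the coefficients $c_i$ do not blow up — but since the residual $\hat{\ve b}$-coordinates are pinned to $[0,1)$ at each completed step, everything is controlled by the size of the Gram--Schmidt data of the input basis, which is itself polynomial; so this obstacle is routine rather than essential.
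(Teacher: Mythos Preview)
Your proposal is correct and follows essentially the same approach as the paper: both run Babai's nearest-plane procedure from the top index down, at each step subtracting $\lfloor\lambda_i\rfloor{\ve b}_i$ where $\lambda_i$ is the current $\hat{\ve b}_i$-coordinate, so that the residual lands in $[0,1)$ along $\hat{\ve b}_i$ and stays there through subsequent steps. The paper's version is terser (it cites Theorem 5.3.26 of \cite{GLS} and omits the inductive correctness argument and bit-size discussion that you spell out), but the algorithm is identical.
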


A proof of Lemma \ref{point_in_GS_box} is implicitly contained, for instance, in the description of the classical nearest plane procedure of Babai \cite{Babaika}. For completeness, we include a proof that follows along an argument of the proof of Theorem 5.3.26 in \cite{GLS}.

\begin{proof}

Let ${\ve x}$ be any point of $\Q^d$. We need to find a point ${\ve y}\in \Lambda$ such that
\be\label{GS-box-representation}
{\ve x}-{\ve y}=\sum_{i=1}^d \lambda_i \hat{\ve b}_i\,,\;\; \lambda_i\in [0, 1)\,, \;\;i \in \{1, \ldots,  d\}\,.
\ee
This can be achieved using the following procedure. First, we find the rational numbers $\lambda_i^0$, $i \in \{1, \ldots,  d\}$
such that
\bea
{\ve x}=\sum_{i=1}^d \lambda_i^0 \hat{\ve b}_i\,.
\eea
This can be done in polynomial time by Theorem 3.3 in \cite{Schrijver}.
Then we subtract $ \lfloor\lambda_d^0 \rfloor {\ve b}_d$ to get a representation
\bea
{\ve x}-\lfloor\lambda_d^0 \rfloor {\ve b}_d=\sum_{i=1}^d \lambda_i^1 \hat{\ve b}_i\,,
\eea
where $\lambda_d^1\in [0, 1)$. Next subtract $ \lfloor\lambda_{d-1}^1 \rfloor {\ve b}_{d-1}$ and so on until we obtain the representation
(\ref{GS-box-representation}). 
\end{proof}

Let now $\Lambda$ be a $d$-dimensional sublattice of $\Z^{d}$.
By Theorem I (A) and Corollary 1 in Chapter I of Cassels \cite{Cassels}, there exists a unique basis ${\ve g}_1, \ldots, {\ve g}_d$ of the sublattice $\Lambda$
of the form
\be\label{special_basis}
\begin{array}{l}
{\ve g}_1= v_{11}{\ve e}_1\,,\\
{\ve g}_2= v_{21}{\ve e}_1+v_{22}{\ve e}_2\,,\\
\vdots\\
{\ve g}_d= v_{d1}{\ve e}_1+\cdots+v_{dd}{\ve e}_d\,,
\end{array}
\ee
where ${\ve e}_i$ are the standard basis vectors of $\Z^d$ and the coefficients $v_{ij}$ satisfy the conditions
$v_{ij}\in \Z, v_{ii}>0 \mbox{ for }i\in\{1,\ldots,d\} $ and $0\le v_{ij}<v_{jj} \mbox{ for }i,j\in\{1,\ldots,d\},i>j$.

\begin{lemma}\label{comp_special_basis}
There exists a polynomial-time algorithm that, given a basis ${\ve b}_1, \ldots, {\ve b}_d$ of a lattice $\Lambda\subset \Z^d$  finds the basis of $\Lambda$ of the form (\ref{special_basis}).
\end{lemma}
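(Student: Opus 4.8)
The goal is to compute, from an arbitrary basis $\ve b_1,\ldots,\ve b_d$ of $\Lambda\subset\Z^d$, the unique upper-triangular-type basis $\ve g_1,\ldots,\ve g_d$ satisfying the normalisation conditions in (\ref{special_basis}). This is precisely the Hermite normal form (HNF) of the matrix whose rows (or columns) are the $\ve b_i$, arranged so that $\ve g_i$ lies in $\spn(\ve e_1,\ldots,\ve e_i)$, and I would prove the lemma by invoking a polynomial-time HNF algorithm and then checking that its output is exactly the basis (\ref{special_basis}).

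\begin{proof}
Form the integer matrix $M\in\Z^{d\times d}$ whose $i$-th row is $\ve b_i$; since $\ve b_1,\ldots,\ve b_d$ is a basis of the full-rank sublattice $\Lambda\subset\Z^d$, $M$ is nonsingular. By Theorem 5.3 (and the surrounding discussion on the Hermite normal form) in Schrijver \cite{Schrijver}, there is a polynomial-time algorithm that computes a unimodular matrix $U\in\Z^{d\times d}$ such that $H=UM$ is in Hermite normal form: $H$ is upper triangular (after the appropriate ordering of coordinates) with positive diagonal entries $h_{ii}>0$ and with each off-diagonal entry in a column reduced modulo the pivot of that column, i.e. $0\le h_{ij}<h_{jj}$ for the relevant indices. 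The rows $\ve g_1,\ldots,\ve g_d$ of $H$ form a basis of $\Lambda$ because $U$ is unimodular, and by construction $\ve g_i\in\spn(\ve e_1,\ldots,\ve e_i)$; writing out the entries shows that $\ve g_1,\ldots,\ve g_d$ satisfies all the conditions listed after (\ref{special_basis}). By the uniqueness statement from Theorem I (A) and Corollary 1 in Chapter I of Cassels \cite{Cassels}, this is the basis (\ref{special_basis}). Finally, all arithmetic is on integers whose bit-sizes are polynomially bounded in the input size — this is the content of the polynomial-time bound in Schrijver's HNF algorithm, which controls intermediate entry growth — so the whole procedure runs in polynomial time. \end{proof}

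**Main obstacle.** There is essentially no mathematical difficulty here: the statement is a repackaging of the existence of a polynomial-time Hermite normal form algorithm, which is standard (the key historical point being control of intermediate coefficient blow-up, due to Kannan–Bachem and refined by others). The only things to be careful about are bookkeeping: matching the row/column and ordering conventions of whichever HNF reference one cites to the exact triangular shape in (\ref{special_basis}), and making sure the reduction-of-off-diagonal-entries convention ($0\le v_{ij}<v_{jj}$ for $i>j$) agrees with the cited normal form. So the "hard part" is purely a matter of aligning conventions and citing the polynomial running-time bound correctly, not of proving anything new.
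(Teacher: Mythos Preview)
Your proof is correct and follows essentially the same approach as the paper: both recognise that the basis (\ref{special_basis}) is, up to a relabelling of rows and columns, the Hermite normal form of the basis matrix, and then invoke a polynomial-time HNF algorithm (you cite Schrijver \cite{Schrijver}, the paper cites Kannan--Bachem \cite{Kannan_Bachem} directly). Your additional remarks on matching conventions and appealing to the uniqueness statement from Cassels are careful but do not change the substance of the argument.
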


\begin{proof}

Let  $V=(v_{ij})\in \Z^{d\times d}$ be the matrix formed by the coefficients $v_{ij}$ in (\ref{special_basis})
with $v_{ij}=0$ for $j>i$. Observe that after a straightforward re-numbering of the rows
and columns of $V$ we obtain a matrix in the row-style Hermite Normal Form.
Now it is  sufficient to notice that the Hermite Normal Form can be computed in polynomial time using an algorithm of Kannan and Bachem \cite{Kannan_Bachem}.
\end{proof}

The Gram-Schmidt orthogonalisation (\ref{GS}) of the basis (\ref{special_basis}) of $\Lambda$ has the form $\hat {\ve g}_1= v_{11}{\ve e}_1, \ldots,  \hat{\ve g}_d=v_{dd}{\ve e}_d$. Therefore, noticing that the basis (\ref{special_basis}) is unique, we can associate with $\Lambda$ the box
\bea
\begin{array}{l}
\B(\Lambda)=\hat \B({\ve g}_1, \ldots, {\ve g}_d)=[0,v_{11})\times [0,v_{22})\times  \cdots \times [0, v_{dd})\,.
\end{array}
\eea

\begin{lemma}\label{prod_det}
For any ${\ve w}=(w_1, \ldots, w_d)^T\in \B(\Lambda) \cap \Z^d$ we have
\bea
\prod_{i=1}^d (1+ w_i) \le \det(\Lambda)\,.
\eea
\end{lemma}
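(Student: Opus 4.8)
The key observation is that $\det(\Lambda) = \prod_{i=1}^d v_{ii}$, since the basis \eqref{special_basis} is lower-triangular (so the Gram matrix has the same determinant as the square of the triangular coefficient matrix, giving $\det(\Lambda) = |v_{11}\cdots v_{dd}| = v_{11}\cdots v_{dd}$ because each $v_{ii}>0$). Thus the claimed inequality is equivalent to $\prod_{i=1}^d (1+w_i) \le \prod_{i=1}^d v_{ii}$ for every integer point ${\ve w} \in \B(\Lambda)$. Since ${\ve w} \in \B(\Lambda)$ means $0 \le w_i < v_{ii}$ for each $i$, and the $w_i$ are integers, we have $w_i \le v_{ii} - 1$, hence $1 + w_i \le v_{ii}$ for each $i$. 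As all factors on both sides are positive integers, the product inequality follows immediately by multiplying these $d$ inequalities together.

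So the entire proof reduces to two clean steps: first, record that $\det(\Lambda) = v_{11}\cdots v_{dd}$, which is already essentially stated in the paragraph preceding the lemma (the Gram–Schmidt vectors are $\hat{\ve g}_i = v_{ii}{\ve e}_i$, so $\det(\Lambda) = \prod |\hat{\ve g}_i| = \prod v_{ii}$); second, apply the integrality of $w_i$ together with the strict upper bound $w_i < v_{ii}$ to get $w_i \le v_{ii}-1$, then multiply. I would write it up in essentially this order.

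There is no real obstacle here — the statement is elementary once the structure of the box $\B(\Lambda)$ and the triangular basis \eqref{special_basis} are in hand. The only point requiring a word of care is the passage from the half-open interval condition $w_i < v_{ii}$ to $w_i \le v_{ii} - 1$, which needs $w_i$ and $v_{ii}$ to be integers; both hold by construction ($w_i \in \Z$ since ${\ve w} \in \Z^d$, and $v_{ii} \in \Z$ by \eqref{special_basis}). One should also note that the empty-product edge cases do not arise since $d \ge 1$ and each $v_{ii} \ge 1$, so every factor $1+w_i$ is a positive integer and the inequalities may be multiplied without sign issues.
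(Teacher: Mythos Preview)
Your proposal is correct and follows exactly the approach of the paper, which merely notes that $\det(\Lambda)=v_{11}\cdots v_{dd}$ by (\ref{special_basis}) and leaves the remaining steps implicit. You have simply spelled out the obvious details (integrality gives $1+w_i\le v_{ii}$, then multiply), so there is nothing to add.
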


\begin{proof}
It is sufficient to notice that by (\ref{special_basis}) $ \det(\Lambda)=v_{11}\cdots v_{dd}$.
\end{proof}

\section{Proof of Theorem \ref{PolySubCone2}}

Given $A\in\Z^{m\times n}$ and ${\ve b}\in\Z^m$, we will denote by $\Gamma(A,{\ve b})$  the set of integer points in the affine subspace
\bea
\Sp({A,{\ve b}})= \{{\ve x}\in \R^n: A{\ve x}={\ve b}\}\,,
\eea
 that is
\bea \Gamma(A,{\ve b})=\Sp({A,{\ve b}})\cap \Z^n\,.\eea
The set $\Gamma(A,{\ve b})$ is either empty or is an affine lattice of the form $\Gamma(A,{\ve b})= {\ve r}+ \Gamma(A)$,
where ${\ve r}$ is any integer vector with $A{\ve r}={\ve b}$  and $\Gamma(A) = \Gamma(A,{\ve 0})$ is the  lattice formed by all integer points in the kernel of the matrix $A$.
We will call the system  $A{\ve x}={\ve b}$ {\em integer feasible} if it has integer solutions or, equivalently, $\Gamma(A,{\ve b})\neq \emptyset$. Otherwise the system is called {\em integer infeasible}.

Let $\pi$ denote the projection map from $\R^n$ to $\R^{n-m}$ that forgets the first $m$ coordinates. Recall that Theorem \ref{PolySubCone2} applies to $A=(B|N)$, where $B$ is nonsingular. It follows that  the restricted map $\pi |_{\Sp({A,{\ve b}})}:\Sp({A,{\ve b}})\rightarrow \R^{n-m} $ is bijective. Specifically, for any ${\ve w}\in \R^{n-m}$ we have
\bea
\pi |_{\Sp({A,{\ve b}})}^{-1}({\ve w})= \left(
\begin{array}{c}{\ve u}\\ {\ve w}\end{array}
\right)\,\mbox{ with } {\ve u}= B^{-1}({\ve b}-N{\ve w})\,.
\eea

For technical reasons, it is convenient to consider the projected set $\Lambda(A,{\ve b})=\pi(\Gamma(A,{\ve b}))$ and the projected lattice $\Lambda(A)=\pi(\Gamma(A))$.
Since the map $\pi |_{\Sp({A,{\ve 0}})}$ is bijective, we  obtain the following lemma.

\begin{lemma}\label{projected_basis} Let ${\ve g}_1, \ldots, {\ve g}_{n-m}$ be a basis of $\Gamma(A)$.
The vectors ${\ve b}_1=\pi({\ve g}_1), \ldots, {\ve b}_{n-m}=\pi({\ve g}_{n-m})$ form a basis of the lattice $\Lambda(A)$.
\end{lemma}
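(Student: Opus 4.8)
The plan is to exploit the linear bijectivity of the restricted projection $\pi|_{\Sp(A,{\ve 0})}$, which is recorded in the paragraph immediately preceding the lemma. Since $\Sp(A,{\ve 0})$ is precisely the real kernel of $A$ and $B$ is nonsingular, this restriction is a linear isomorphism from the $(n-m)$-dimensional subspace $\Sp(A,{\ve 0})$ onto $\R^{n-m}$. The lattice $\Gamma(A)$ lives inside $\Sp(A,{\ve 0})$, so the whole question takes place inside a space on which $\pi$ is an honest isomorphism, and the lemma should reduce to the elementary fact that a linear isomorphism carries a lattice basis to a lattice basis.

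Concretely, I would first note that a basis ${\ve g}_1,\ldots,{\ve g}_{n-m}$ of the rank-$(n-m)$ lattice $\Gamma(A)$ consists of vectors that are linearly independent over $\R$ and that generate $\Gamma(A)$ over $\Z$. Because $\pi|_{\Sp(A,{\ve 0})}$ is injective and linear, it preserves $\R$-linear independence, so the images ${\ve b}_i=\pi({\ve g}_i)$ are linearly independent in $\R^{n-m}$. Next, using linearity of $\pi$, for any integer coefficients $c_1,\ldots,c_{n-m}$ one has $\pi\!\left(\sum_i c_i{\ve g}_i\right)=\sum_i c_i{\ve b}_i$; applying this to the defining relation $\Gamma(A)=\{\sum_i c_i{\ve g}_i:c_i\in\Z\}$ yields $\Lambda(A)=\pi(\Gamma(A))=\{\sum_i c_i{\ve b}_i:c_i\in\Z\}$. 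Thus the ${\ve b}_i$ are linearly independent and $\Z$-generate $\Lambda(A)$, which is exactly the assertion that they form a basis of $\Lambda(A)$.

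There is no serious obstacle here; the only point worth stating carefully is that $\Lambda(A)$ really is a lattice, that is, a discrete subgroup of full rank in its linear span, rather than merely an abstract subgroup. This too is immediate from the injectivity of $\pi$ on $\Sp(A,{\ve 0})$: the image of the discrete set $\Gamma(A)$ under an injective linear map is again discrete, and the linear independence of the ${\ve b}_i$ shows that $\Lambda(A)$ has rank $n-m$. Hence identifying $\Lambda(A)$ with the $\Z$-span of the ${\ve b}_i$ completes the proof.
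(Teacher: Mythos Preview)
Your proof is correct and follows exactly the approach the paper intends: the paper does not give a separate proof of this lemma but merely records, in the sentence preceding it, that the restriction $\pi|_{\Sp(A,{\ve 0})}$ is bijective, leaving the deduction you spell out as immediate. Your write-up is simply a careful expansion of that one-line justification.
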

Using notation of Lemma \ref{projected_basis},  let $G\in \Z^{n \times (n-m)}$ be the matrix with columns  ${\ve g}_1, \ldots, {\ve g}_{n-m}$.
We will denote by $F$ the $(n-m)\times (n-m)$-submatrix of $G$ consisting of the last $n-m$
rows; hence, the columns of $F$ are ${\ve b}_1, \ldots, {\ve b}_{n-m}$. Then $\det(\Lambda(A))=|\det(F)|$.
The rows of the matrix $A$ span the $m$-dimensional rational subspace of $\R^n$ orthogonal to the $(n-m)$-dimensional rational subspace spanned by the columns of $G$. Therefore, by Lemma 5G and Corollary 5I in \cite{Schmidt_Approximations}, we have $|\det(F)|=|\det(B)|/\gcd(A)$ and, consequently,

%
\be\label{determinant_via_submatrix}\det(\Lambda(A))=\frac{|\det(B)|}{\gcd(A)}\,.\ee
%


Consider the following algorithm.

\vskip.5cm
\noindent{\bf Algorithm 1}
\vskip.3cm
\begin{itemize}

\item[{\em Input:}]  $(A, {\ve b})$, where $A=(B|N)\in \Z^{m\times n}$, $m<n$, with nonsingular $B\in \Z^{m\times m}$ and ${\ve b}\in \Z^m$.

\item[{\em Output:}] Solution ${\ve x}\in \Z^n$ to an integer feasible system $A{\ve x}={\ve b}$.
\vskip.2cm

\item[{\em Step 0:}] If $\Gamma(A, {\ve b})=\emptyset$ then the system $A{\ve x}={\ve b}$ is integer infeasible. Stop.

\item[{\em Step 1:}] Compute a point ${\ve z}$ of the affine lattice $\Lambda(A, {\ve b})$.

\item[{\em Step 2:}] Find a point ${\ve y}\in \Lambda(A)$ such that ${\ve z}\in {\ve y}+{\B}(\Lambda(A))$.

\item[{\em Step 3:}] Set ${\ve w}={\ve z}-{\ve y}$ and output the vector 
\be\label{lifting}
{\ve x}=
\left(
\begin{array}{c}{\ve u}\\ {\ve w}\end{array}
\right) \mbox{ with }{\ve u}= B^{-1}({\ve b}-N{\ve w})\,.
\ee

\end{itemize}
\vskip.5cm

Note that Algorithm 1 will be also used in the proof of Corollary \ref{PT2}, where the condition (\ref{Gomory_cone}) is replaced by its refinement (\ref{Brimkov_bound}). For this reason,  we do not require that the input of the algorithm satisfies (\ref{Gomory_cone}) and, as a consequence, the algorithm outputs a certain integer, but not necessarily nonnegative
solution to an integer feasible system $A{\ve x}={\ve b}$ or detects integer infeasibility.

To complete the proof of Theorem \ref{PolySubCone2}, it is sufficient to show that Algorithm 1 is polynomial-time and that this algorithm computes a nonnegative integer solution to any integer feasible system $A{\ve x}={\ve b}$ that satisfies its input conditions together with (\ref{Gomory_cone}).


Let us show that all steps of the Algorithm 1 can be computed in polynomial time.
By Corollaries 5.3 b,c in \cite{Schrijver} we can compute in polynomial time integer vectors ${\ve r}, {\ve g}_1, \ldots, {\ve g}_{n-m}$ such that
\be\label{repres}\begin{split}
\Gamma(A,{\ve b})= {\ve r}+ \sum_{i=1}^{n-m}\lambda_i {\ve g}_i\,,
 \lambda_i\in \Z\,, i\in\{1, \ldots, n-m\}
 \end{split}
\ee
or determine that $\Gamma(A,{\ve b})$ is empty.  This settles Step 0 and Step 1.
%
Further,  the vectors ${\ve g}_1, \ldots, {\ve g}_{n-m}$ in (\ref{repres}) form a basis of  the lattice $\Gamma(A) $. In Step 2 we first find the projected vectors ${\ve b}_1=\pi({\ve g}_1), \ldots, {\ve b}_{n-m}=\pi({\ve g}_{n-m})$ that form a basis of the lattice $\Lambda(A)$ by Lemma \ref{projected_basis}. Then  the point  ${\ve y}$ can be computed in polynomial time using Lemmas \ref{comp_special_basis} and \ref{point_in_GS_box}.
Finally, the lifted point ${\ve x}$ in Step 3 is computed in polynomial time by a straightforward  calculation (\ref{lifting}).

We will now show that Algorithm 1 computes a nonnegative integer solution to any integer feasible system $A{\ve x}={\ve b}$ with $(A, {\ve b})$ satisfying its input conditions together with (\ref{Gomory_cone}). By Step 0, we may assume that $\Gamma(A, {\ve b})\neq \emptyset$ and hence at Step 1 we can find a point ${\ve z}\in \Lambda(A, {\ve b})$. At Step 2 we can find a point ${\ve y}\in \Lambda(A)$ with ${\ve z}\in {\ve y}+{\B}(\Lambda(A))$ by Lemma \ref{point_in_GS_box}. Hence, the point ${\ve w}={\ve z}-{\ve y}$ at Step 3 is a nonnegative point of the affine lattice  $\Lambda(A, {\ve b})$. 
%
Further, since ${\ve w}\in \Lambda(A, {\ve b})$ and $\pi |_{\Sp({A,{\ve b}})}$ is bijective, the point  ${\ve x}= \pi |_{\Sp({A,{\ve b}})}^{-1}({\ve w})$ is integer. Summarising, we have
\be\label{summary}
{\ve x}=
\left(
\begin{array}{c}{\ve u}\\ {\ve w}\end{array}
\right) \in {\Sp(A,{\ve b})}\cap\Z^n \mbox{ and }  \pi({\ve x})={\ve w}\ge {\ve 0}\,.
\ee
It is now sufficient to show that ${\ve u}\ge {\ve 0}$.

Observe that, by construction, ${\ve w}\in {\B}(\Lambda(A))$. Hence, Lemma \ref{prod_det}, applied to ${\ve w}$ and $\Lambda= \Lambda(A)$, implies
\be\label{prod_det_ineq}\begin{split}
\prod_{i=1}^{n-m} (1+ w_i) \le \det(\Lambda(A))\,.
\end{split}
\ee
Expanding the product in (\ref{prod_det_ineq}) gives
\bea\label{sum_det_ineq}\begin{split}
\sum_{i=1}^{n-m} w_i \le\det(\Lambda(A))-1\,.
\end{split}
\eea
Hence, denoting by $\|\cdot\|_2$ the Euclidean norm, we obtain the inequality
\be\label{Nw}
\|N{\ve w}\|_2 \le l_{N} \sum_{i=1}^{n-m} w_i \le l_{N} ( \det(\Lambda(A))-1)\,.
\ee
By (\ref{determinant_via_submatrix}), ${\ve b}\in \C_B(l_{N} (\det(\Lambda(A))-1))$
and by (\ref{Nw}), ${\ve b}-N{\ve w} \in \C_B$.
The cone $\C_B$  can be written as
\bea 
\C_B=\{{\ve y}\in\R^m: B^{-1}{\ve y} \ge {\ve 0}\}\,
\eea
and therefore
\bea
{\ve u}= B^{-1}({\ve b}-N{\ve w})\ge {\ve 0}\,.
\eea
\qed

\section{Proof of Corollary \ref{PT2}}\label{m_one}

Let $A=(a_{11}, \ldots, a_{1n})\in \Z^{1\times n}$ satisfy (\ref{positive_primitive_a}). Then the lattice $\Lambda(A)$ can be written in the form
\bea
\Lambda(A)=\{{\ve x}\in \Z^{n-1}: a_{12} x_1+\cdots +a_{1 n} x_{n-1} \equiv 0 \,(\modulo a_{11})\}\,.
\eea
Note also that $\det(\Lambda(A))=a_{11}$ by (\ref{determinant_via_submatrix}).

The next lemma shows that the  box $B(\Lambda(A))$ is entirely determined by the parameters $f_i$ defined by (\ref{corners_f}).
\begin{lemma}\label{boxshape}
The box $B=B(\Lambda(A))$ has the form
\bea
B=\left[0,\frac{f_1}{f_2}\right)\times \left [0,\frac{f_2}{f_3}\right)\times \cdots \times \left [0, \frac{f_{n-1}}{f_{n}}\right)\,.
\eea
\end{lemma}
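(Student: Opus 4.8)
The plan is to compute explicitly the special basis (\ref{special_basis}) of the lattice $\Lambda(A)$, whose diagonal entries $v_{11}, \dots, v_{n-1,n-1}$ are exactly the edge lengths of the box $B(\Lambda(A))$. Since $\Lambda(A)=\{{\ve x}\in\Z^{n-1}: a_{12}x_1+\cdots+a_{1n}x_{n-1}\cdot(\text{index shift})\equiv 0 \pmod{a_{11}}\}$ is the kernel modulo $a_{11}$ of the linear form with coefficients $a_{12},\dots,a_{1n}$, I would exhibit a lower-triangular basis directly. The natural candidate: the $i$-th basis vector ${\ve g}_i$ should involve only the first $i$ coordinates, so its leading coefficient $v_{ii}$ is determined by the smallest positive multiple of ${\ve e}_i$ that can be corrected back into the lattice using ${\ve e}_1,\dots,{\ve e}_{i-1}$ together with the modular relation. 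A short computation with gcd's shows this multiple is $f_i/f_{i+1}$, where $f_i=\gcd(a_{11},\dots,a_{1i})$; indeed $v_{ii}$ is the least positive $c$ such that the linear form evaluated at $c\,{\ve e}_i$ lies in the subgroup of $\Z/a_{11}\Z$ generated by the images of ${\ve e}_1,\dots,{\ve e}_{i-1}$, i.e. $c\, a_{1,i+1}\equiv 0 \pmod{\gcd(a_{11},a_{12},\dots,a_{1i})}$ after renumbering, which gives $c = f_i/f_{i+1}$.

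Concretely I would proceed as follows. First, set up the correspondence: after discarding the first column of $A$ (which plays the role of $B$), $\Lambda(A)$ is generated over $\Z$ by $a_{11}{\ve e}_j$ for $j=1,\dots,n-1$ together with vectors encoding the relations among the $a_{1,j+1}$; equivalently it is the preimage in $\Z^{n-1}$ of the kernel of the homomorphism $\phi:\Z^{n-1}\to\Z/a_{11}\Z$ sending ${\ve e}_j\mapsto a_{1,j+1}\bmod a_{11}$. Second, I compute the index of the subgroup generated by ${\ve e}_1,\dots,{\ve e}_k$ modulo those that lie in $\ker\phi$: the image $\phi(\langle {\ve e}_1,\dots,{\ve e}_k\rangle)$ is the subgroup of $\Z/a_{11}\Z$ generated by $a_{12},\dots,a_{1,k+1}$, which has order $a_{11}/\gcd(a_{11},a_{12},\dots,a_{1,k+1}) = a_{11}/f_{k+1}$. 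Third, from the telescoping structure of a lower-triangular (Hermite) basis, $v_{kk}$ equals the ratio of consecutive such orders, namely $(a_{11}/f_{k+1})/(a_{11}/f_{k}) = f_k/f_{k+1}$. Reindexing so that the $k$-th edge of the box corresponds to coordinate $k$ and $f_n=\gcd$ of all the $a_{1i}=1$ by (\ref{positive_primitive_a})(i), this yields edge lengths $f_1/f_2, f_2/f_3, \dots, f_{n-1}/f_n$, and one checks the product telescopes to $f_1 = a_{11} = \det(\Lambda(A))$, consistent with Lemma \ref{prod_det} and (\ref{determinant_via_submatrix}).

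The main obstacle I anticipate is bookkeeping with the index shift (the box lives in $\R^{n-1}$ with coordinates indexed by the columns $2,\dots,n$ of $A$, while the $f_i$ are defined via prefixes $a_{11},\dots,a_{1i}$) and, more substantively, justifying that the naive lower-triangular basis I write down is \emph{the} reduced basis (\ref{special_basis}) — i.e., that the off-diagonal entries can be taken to satisfy $0\le v_{ij}<v_{jj}$ and that the diagonal entries are forced, not merely achievable. This is handled by the uniqueness statement cited from Cassels \cite{Cassels}: it suffices to produce \emph{any} basis in lower-triangular form with the claimed diagonal, since the diagonal of the Hermite normal form is an invariant of the lattice; the off-diagonal reduction $v_{ij}\bmod v_{jj}$ is then automatic and does not affect the box $B(\Lambda(A))$. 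A clean way to package this is to argue at the level of the filtration $\Lambda(A)\cap(\R^k\times\{0\}^{n-1-k})$ and compute successive quotients, each of which is cyclic of the order computed above; the orders of these quotients are precisely the $v_{kk}$, so the shape of the box is pinned down without ever exhibiting the basis vectors explicitly.
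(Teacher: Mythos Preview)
Your proposal is correct and is essentially the paper's own argument: the paper also computes, for each $i$, the determinant of the sublattice $\Lambda_i(A)=\Lambda(A)\cap(\Z^i\times\{0\}^{n-1-i})$ (written out as the congruence $\frac{a_{12}}{f_{i+1}}x_1+\cdots+\frac{a_{1,i+1}}{f_{i+1}}x_i\equiv 0\pmod{a_{11}/f_{i+1}}$, giving $\det(\Lambda_i(A))=a_{11}/f_{i+1}$), and then reads off $v_{ii}=\det(\Lambda_i(A))/\det(\Lambda_{i-1}(A))=f_i/f_{i+1}$. Your phrasing via the homomorphism $\phi:\Z^{n-1}\to\Z/a_{11}\Z$ and the orders of the image subgroups is just the first-isomorphism-theorem repackaging of the same computation.
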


\begin{proof}

By the definition of the box $B(\Lambda(A))$, it is sufficient to show that
\be\label{boxcorners}
v_{11}=\frac{f_1}{f_2}, v_{22}=\frac{f_2}{f_3}, \ldots, v_{n-1\,n-1}=\frac{f_{n-1}}{f_{n}}\,.
\ee
Let ${\ve g}_1, \ldots, {\ve g}_{n-1}$ be the basis of the form (\ref{special_basis}) of the lattice $\Lambda(A)$. Let $\Lambda_i(A)$ denote the sublattice of $\Lambda(A)$
generated by the first $i$ basis vectors ${\ve g}_1, \ldots, {\ve g}_i$.
We can write $\Lambda_i(A)$ in the form
\bea
\begin{split}
\Lambda_i(A)=\left\{(x_1, \ldots, x_i, 0, \ldots, 0)^T\in \Z^{n-1}:\right.
\frac{a_{12}}{f_{i+1}} x_1+\cdots +\frac{a_{1i+1}}{f_{i+1}} x_i \equiv 0\,\\
\left.\left(\modulo \frac{a_{11}}{f_{i+1}}\right)\right\}\,.
\end{split}
\eea
Hence, $\det(\Lambda_i(A))=a_{11}/f_{i+1}$, $i\in\{1,\ldots, n-1\}$.
On the other hand,  (\ref{special_basis}) implies $\det(\Lambda_i(A))= v_{11}v_{22}\cdots v_{ii}$, $i\in\{1,\ldots, n-1\}$. Since $a_{11}=\det(\Lambda(A))=v_{11}v_{22}\cdots v_{n-1\, n-1}$, we have
$f_{i+1}=v_{i+1\, i+1}\cdots v_{n-1\, n-1}$ for $i\in\{1,\ldots, n-2\}$. Noticing that $f_1=a_{11}$ and $f_n=1$, we obtain
(\ref{boxcorners}).
\end{proof}

Suppose that $b>\brauer(A)$. The condition (\ref{positive_primitive_a}) (i) implies that the equation $A{\ve x}=b$ has integer solutions. Therefore, it is sufficient to show that the vector ${\ve x}$ computed by Algorithm 1 is nonnegative. When $m=1$,  (\ref{lifting}) sets  ${\ve x}=(u, w_1, \ldots, w_{n-1})^T$ with
\be\label{form_u}
u= \frac{b-a_{12}w_{1}-\cdots-a_{1 n}w_{n-1}}{a_{11}}\,.
\ee
Further, (\ref{summary}) implies that ${\ve w}=(w_1, \ldots, w_{n-1})^T\in \Lambda(A, b)$ is nonnegative and $u\in \Z$.

To see that $u\ge 0$, we observe first that the points of the affine lattice $ \Lambda(A, b)$ are split into the layers of the form
\be\label{layers}
a_{12}x_{1}+\cdots+a_{1 n}x_{n-1}=b+k a_{11}\,, k\in\Z\,.
\ee
Suppose, to derive a contradiction, that $u<0$. Then, by (\ref{form_u}),
\be \label{lower_bound_w}a_{12}w_{1}+\cdots+a_{1 n}w_{n-1}>b\,.\ee
On the other hand, by construction,  ${\ve w}\in {B}(\Lambda(A))$ and hence, using Lemma \ref{boxshape} and noticing (\ref{Brauer_bound}),
 \be  \label{upper_bound_w} a_{12}w_{1}+\cdots+a_{1 n}w_{n-1}\le G(A)+a_{11}<b+a_{11}\,.\ee
Due to (\ref{layers}), the bounds (\ref{lower_bound_w}) and (\ref{upper_bound_w}) imply ${\ve w}\notin \Lambda(A, b)$.
The obtained contradiction shows that $u\ge 0$. 

\section{Proof of Corollary \ref{polynomial_algorithm_coro}}\label{Section_coro}

We will show that a nonnegative integer solution to the system $A{\ve x}={\ve b}$ can be computed using Algorithm 1 from the proof of Theorem \ref{PolySubCone2}.
By condition (\ref{positive_primitive_a}) (i), the system $A{\ve x}={\ve b}$ is integer feasible. Following the proof of Theorem \ref{PolySubCone2}, it is sufficient to show that any ${\ve b}$ that satisfies (\ref{desired_cone_1}) must satisfy (\ref{Gomory_cone}).

 Let $h$ denote the distance from the vector ${\ve v}$ to the boundary of $\C_B$. Observe that we can write ${\ve v}={\ve v}_1+ {\ve v}_2 +{\ve p}$, where ${\ve v}_1$, ${\ve v}_2$ are  the columns of $B$ and  ${\ve p}\in \C_B$. Therefore, we have
\bea
h\ge \frac{|\det(B)|}{l_B}\,
\eea
and, consequently, the points of the affine cone
\bea
\frac{l_Bl_N}{|\det(B)|}\left(|\det(B)|-1\right){\ve v}+\C_A
\eea
are at the distance $\ge l_N(|\det(B)|-1)$ to the boundary of $\C_B$.

\section{Acknowledgement}

The author is grateful to Valentin Brimkov, Martin Henk and Timm Oertel for valuable comments and suggestions.

\end{document}